\theoremstyle{plain}
\newtheorem*{theorem*}{Theorem}
\newtheorem{lemma}{Lemma}
\newtheorem*{lemma*}{Lemma}
\theoremstyle{definition}
\newtheorem{definition}{Definition}
\newtheorem*{definition*}{Definition}
\theoremstyle{remark}
\newtheorem{remark}{Remark}
\newtheorem*{remark*}{Remark}
\begin{document}
\title[Generalizations of certain  representations of real numbers]{Generalizations of certain representations of real numbers}
\author{Symon Serbenyuk}
\address{ 45 Shchukina St.\\ 
Vinnytsia\\ 
21012 \\
  Ukraine}
\email{simon6@ukr.net}

\subjclass[2010]{11K55, 11J72}

\keywords{
s-adic representation,   nega-s-adic representation,  sign-variable expansion of a real number, $\tilde Q^{'} _{\mathbb N_B}$-representation.}

\begin{abstract}

In the present article,  real number representations that are generalizations of classical positive and alternating representations of numbers, are introduced and investigated. The main metric relation, properties of cylinder sets are  proved. The  theorem on the representation of real numbers from a certain interval is formulated. 
 
\end{abstract}
\maketitle



Let $\mathbb N_B$ be a fixed subset of positive integers, $B=(b_n)$ be a fixed increasing sequence of all elements of $\mathbb N_B$, $\rho_0=0$,
$$
\rho_n=\begin{cases}
1&\text{if $n\in \mathbb N_B$}\\
2&\text{if  $n\notin \mathbb N_B$,}
\end{cases}
$$
and
$$
\tilde Q^{'} _{\mathbb N_B}=
\begin{pmatrix}
(-1)^{\rho_1}q_{0,1}& (-1)^{\rho_1+\rho_2}q_{0,2} &\ldots & (-1)^{\rho_{n-1}+\rho_n}q_{0,n}&\ldots\\
(-1)^{\rho_1}q_{1,1}&  (-1)^{\rho_1+\rho_2}q_{1,2} &\ldots & (-1)^{\rho_{n-1}+\rho_n}q_{1,n}&\ldots\\
\vdots& \vdots &\ddots & \vdots&\ldots\\
(-1)^{\rho_1}q_{m_{1}-1,1}& (-1)^{\rho_1+\rho_2}q_{m_{2}-2,2} &\ldots & (-1)^{\rho_{n-1}+\rho_n}q_{m_{n},n} &\ldots\\
(-1)^{\rho_1}q_{m_{1},1}  &    (-1)^{\rho_1+\rho_2}q_{m_{2}-1,2}        & \dots &                        &  \dots\\
                         &  (-1)^{\rho_1+\rho_2}q_{m_{2},2}                       &  \dots &                & \dots
\end{pmatrix}
$$
be a fixed matrix. Here $n=1,2,\dots ,$ $m_n\in\mathbb N\cup \{0,+\infty\}$ ($\mathbb N$ is the set of all positive integers), and numbers $q_{i,n}$ ($i=\overline{0,m_n}$, i.e., $i\in \{0,1,2, \ldots , m_n\}$) satisfy the following system of conditions:
\begin{equation*}
\left\{
\begin{aligned}
1^{\circ}. ~q_{i,n}>0 ~\text{for all} ~  i=\overline{0,m_n} ~\text{and} ~n=1,2,\dots \\
2^{\circ}.  \   \    \ \  \ \  \ \  \  \  \  \   \  \ \   \ \  \ \  \ \sum^{m_n}_{i=0} {q_{i,n}}=1~ \text{for any } n\in\mathbb N\\
3^{\circ}.   \    \  \   \  \  \   \   \  \ \prod^{\infty} _{j=1} {q_{i_j,j}}=0~ \text{for any sequence} ~(i_n).\\
\end{aligned}
\right.
\end{equation*}
That is  the matrix $\tilde Q^{'} _{\mathbb N_B}$  is a  matrix whose the nth   column (for an arbitrary positive integer $n$) is finite or  infinite  and the column summing to $(-1)^{\rho_{n-1}+\rho_{n}}$, i.e., each column summing to $1$ or $-1$. Different columns can contain different numbers of elements (i.e., there exists a finite number of elements if the condition $m_n<\infty$ holds and there exists an infinite number of elements if $m_n=\infty$ is true). 

Since 
different representations of real numbers are widely used in the theory of functions with a complicated local structure, theory of dynamical systems, fractal theory, set theory, etc. (e.g., see \cite{{Bush1952}}--\cite{ {Renyi1957}}, \cite{ {Symon2017}, {S. Serbenyuk preprint2}, {S.Serbenyuk 2017}}), 
in the present article, a representation of real numbers by the following series is introduced:
\begin{equation}
\label{eq: the main series}
(-1)^{\rho_1}a_{i_1,1}+\sum^{\infty} _{n=2}{\left((-1)^{\rho_n}a_{i_n,n}\prod^{n-1} _{j=1} {q_{i_j,j}}\right)},
\end{equation}
where
$$
a_{i_n,n}=\begin{cases}
\sum^{i_n-1} _{i=0} {q_{i,n}}&\text{if $i_n\ne 0$ }\\
0&\text{if $i_n=0$.}
\end{cases}
$$

We say that an expansion of a number $x$ in series \eqref{eq: the main series} is \emph{the sign-variable $\tilde Q^{'} _{\mathbb N_B}$-expansion of $x$} and write $\Delta^{\tilde Q^{'} _{\mathbb N_B}} _{i_1i_2...i_n...}$. The last notation is called \emph{the $\tilde Q^{'} _{\mathbb N_B}$-representation of $x$} or \emph{the quasy-nega-$\tilde Q$-representation of $x$}.  The set $\{0,1, \dots , m_n\}$ is an alphabet for the symbol $i_n$ in the $\Delta^{\tilde Q^{'} _{\mathbb N_B}} _{i_1i_2...i_n...}$.  

We can model expansion~\eqref{eq: the main series} by the following way. Suppose that $\rho_0=0$ and given the matrix $\tilde Q^{'} _{\mathbb N_B}$. Then
$$
\sum^{i_1-1} _{i=0}{\left((-1)^{\rho_1}q_{i,1}\right)}+\sum^{\infty} _{n=2}{\left(\left(\sum^{i_n-1} _{i=0}{((-1)^{\rho_{n-1}+\rho_n}q_{i,n})}\right)\left(\prod^{n-1} _{j=1}{(-1)^{\rho_{j-1}+\rho_j}q_{i_j,j}}\right)\right)}
$$
$$
=(-1)^{\rho_1}a_{i_1,1}
$$
$$
+\sum^{\infty} _{n=2}{\left(\left(\sum^{i_n-1} _{i=0}{(-1)^{\rho_{n-1}+\rho_n}q_{i,n}}\right)\left((-1)^{\rho_{0}+\rho_1}q_{i_1,1}\cdots(-1)^{\rho_{n-3}+\rho_{n-2}}q_{i_{n-2},n-2}(-1)^{\rho_{n-2}+\rho_{n-1}}q_{i_{n-1},n-1}\right)\right)}
$$
$$
=(-1)^{\rho_1}a_{i_1,1}+\sum^{\infty} _{n=2}{\left((-1)^{2\rho_1+2\rho_2+\dots+2\rho_{n-1}+\rho_n}a_{i_n,n}\prod^{n-1} _{j=1}{q_{i_j,j}}\right)}=(-1)^{\rho_1}a_{i_1,1}+\sum^{\infty} _{n=2}{\left((-1)^{\rho_n}a_{i_n,n}\prod^{n-1} _{j=1} {q_{i_j,j}}\right)}.
$$

It is obvious that the last series is absolutely convergent and its sum belongs to $[a^{'}, a^{''}]$, where
$$
a^{'}=\inf\Delta^{\tilde Q^{'} _{\mathbb N_B}} _{i_1i_2...i_n...}=(-1)^{\rho_1}\check a_{i_1,1}+\sum^{\infty} _{n=2}{\left((-1)^{\rho_n}\check a_{i_n,n}\prod^{n-1} _{j=1} {\check q_{i_j,j}}\right)}=(-1)^{\rho_1}\check a_{i_1,1}-\sum _{1<n\in\mathbb N_B}{\left(a_{m_n,n}\prod^{n-1} _{j=1} {\check q_{i_j,j}}\right)},
$$
$$
a^{''}=\sup\Delta^{\tilde Q^{'} _{\mathbb N_B}} _{i_1i_2...i_n...}=(-1)^{\rho_1}\hat a_{i_1,1}+\sum^{\infty} _{n=2}{\left((-1)^{\rho_n}\hat a_{i_n,n}\prod^{n-1} _{j=1} {\hat q_{i_j,j}}\right)}=(-1)^{\rho_1}\hat a_{i_1,1}+\sum_{1<n\notin\mathbb N_B}{\left(a_{m_n,n}\prod^{n-1} _{j=1} {\hat q_{i_j,j}}\right)},
$$
where
$$
\check a_{i_n,n}=\begin{cases}
a_{m_n,n}&\text{if $n \in \mathbb N_B$}\\
a_{0,n}&\text{if $n \notin \mathbb N_B$,}
\end{cases}
~~~~~~~\check q_{i_n,n}=\begin{cases}
q_{m_n,n}&\text{if $n \in \mathbb N_B$}\\
q_{0,n}&\text{if $n \notin \mathbb N_B$,}
\end{cases}
$$
and
$$
\hat a_{i_n,n}=\begin{cases}
a_{0,n}&\text{if $n \in \mathbb N_B$}\\
a_{m_n,n}&\text{if $n \notin \mathbb N_B$,}
\end{cases}
~~~~~~~\hat q_{i_n,n}=\begin{cases}
q_{0,n}&\text{if $n \in \mathbb N_B$}\\
q_{m_n,n}&\text{if $n \notin \mathbb N_B$.}
\end{cases}
$$

Let  $s>1$ be a fixed positive integer and $Q\equiv (q_n)$ be a fixed sequence of positive integers such that $q_n>1$. By $[x]$ denote the integer part of $x$.
It is easy to see that  
the $\tilde Q^{'} _{\mathbb N_B}$-representation is:
\begin{itemize}
\item the nega-$\tilde Q$-representation~\cite{Serbenyuk2016} whenever $\mathbb N_B$ is the set of all odd numbers:
$$
x=\Delta^{(-\tilde Q)} _{i_1i_2...i_n...}\equiv -a_{i_1,1}+\sum^{\infty} _{n=2}{\left((-1)^na_{i_n,n}\prod^{n-1} _{j=1}{q_{i_j,j}}\right)};
$$
\item the representation by positive~\cite{Cantor1} Cantor series whenever $\mathbb N_B=\varnothing$ and $q_{i,n}=\frac{1}{q_n}$ for any  $n \in \mathbb N$, $i=\overline{0,q_n-1}$:
$$
x=\Delta^{Q} _{i_1i_2...i_n...}\equiv \frac{i_1}{q_1}+\frac{i_2}{q_1q_2}+\dots +\frac{i_n}{q_1q_2\dots q_n}+\dots,
$$
i.e., in this case, the matrix $\tilde Q^{'} _{\mathbb N_B}$ is following

\begin{center}
$$
\begin{pmatrix}
\frac{1}{q_1}& \frac{1}{q_2} &\ldots & \frac{1}{q_n}&\ldots\\
\frac{1}{q_1}&  \frac{1}{q_2} &\ldots & \frac{1}{q_n}&\ldots\\
\vdots& \vdots &\ddots & \vdots&\ldots\\
\frac{1}{q_1}& \frac{1}{q_2} &\ldots & \frac{1}{q_n} &\ldots\\
\frac{1}{q_1} &    \frac{1}{q_2}        & \dots &                        &  \dots\\
                         &  \frac{1}{q_2}                       &  \dots &                & \dots
\end{pmatrix};
$$
\end{center}

\item the representation by alternating~\cite{Serbenyuk2017} Cantor series whenever $\mathbb N_B$ is the set of all odd numbers and $q_{i,n}=\frac{1}{q_n}$ for any  $n \in \mathbb N$, $i=\overline{0,q_n-1}$:
$$
x=\Delta^{-Q} _{i_1i_2...i_n...}\equiv\frac{i_1}{-q_1}+\frac{i_2}{(-q_1)(-q_2)}+\dots +\frac{i_n}{(-q_1)(-q_2)\dots (-q_n)}+\dots,
$$
where
\begin{center}
$$
\tilde Q^{'} _{\mathbb N_B}=\begin{pmatrix}
-\frac{1}{q_1}& -\frac{1}{q_2} &\ldots & -\frac{1}{q_n}&\ldots\\
-\frac{1}{q_1}&  -\frac{1}{q_2} &\ldots & -\frac{1}{q_n}&\ldots\\
\vdots& \vdots &\ddots & \vdots&\ldots\\
-\frac{1}{q_1}&-\frac{1}{q_2} &\ldots & -\frac{1}{q_n} &\ldots\\
-\frac{1}{q_1} &    -\frac{1}{q_2}        & \dots &                        &  \dots\\
                         &  -\frac{1}{q_2}                       &  \dots &                & \dots
\end{pmatrix};
$$
\end{center}
\item the s-adic representation (see \cite{{Renyi1957}}) whenever $\mathbb N_B=\varnothing$ and $q_{i,n}=\frac{1}{s}$ for all  $n\in \mathbb N$,  $i=\overline{0, s-1}$:
$$
x=\Delta^{s} _{i_1i_2...i_n...}\equiv\sum^{\infty} _{n=1} {\frac{i_n}{s^n}},
$$
i.e., in this case, our matrix is following
$$
\tilde Q^{'} _{\mathbb N_B}=\begin{pmatrix}
\frac{1}{s}& \frac{1}{s} &\ldots & \frac{1}{s}&\ldots\\
\frac{1}{s}&  \frac{1}{s} &\ldots & \frac{1}{s}&\ldots\\
\vdots& \vdots &\ddots & \vdots&\ldots\\
\frac{1}{s}& \frac{1}{s} &\ldots & \frac{1}{s} &\ldots\\

\end{pmatrix};
$$
\item
 the $nega-s-adic-representation$ (\cite{{IS2009}}) whenever $\mathbb N_B$ is the set of all odd numbers and $q_{i,n}=\frac{1}{s}$ for all  $n\in \mathbb N$,  $i=\overline{0, s-1}$:
$$
x=\Delta^{-s} _{i_1i_2...i_n...}\equiv\sum^{\infty} _{n=1} {\frac{(-1)^{n}i_n}{s^n}},
$$
where
$$
\tilde Q^{'} _{\mathbb N_B}=\begin{pmatrix}
-\frac{1}{s}& -\frac{1}{s} &\ldots & -\frac{1}{s}&\ldots\\
-\frac{1}{s}&  -\frac{1}{s} &\ldots & -\frac{1}{s}&\ldots\\
\vdots& \vdots &\ddots & \vdots&\ldots\\
-\frac{1}{s}& -\frac{1}{s} &\ldots & -\frac{1}{s} &\ldots\\

\end{pmatrix}.
$$
\end{itemize}

Suppose that $q_{i,n}=\frac{1}{s}$ for all  $n\in \mathbb N$,  $i=\overline{0, s-1}$, and $s>1$ is a fixed positive integer. Then we get the following expansion of real numbers
$$
x=\Delta^{(\pm s, \mathbb N_B)} _{\alpha_1\alpha_2...\alpha_n...}\equiv \sum^{\infty} _{n=1} {\frac{(-1)^{\rho_n}\alpha_n}{s^n}}.
$$
The last representation described by the author of the present article in~\cite{S.Serbenyuk},  is a generalization of the classical s-adic and nega-s-adic representations. 
\begin{remark}
The term ``nega" is used in this article since   certain expansions of real numbers are numeral systems with a negative base.
\end{remark}

Consider other examples of the matrix $\tilde Q^{'} _{\mathbb N_B}$.
\begin{itemize}
\item Suppose that the condition $m_n=\infty$ holds for all  $n\in \mathbb N$ and the set $\mathbb N_B$ is the set 
$$
\{n: n=4k+1, n=4k+2 \}, \text{where}~ k=1,2,3, \dots.
$$
Let us consider the matrix
$$
\tilde Q^{'} _{\mathbb N_B}=
\begin{pmatrix}
-\frac{1}{2}& \frac{2}{3} &    -\frac{3}{4} &\ldots & (-1)^n\frac{n}{n+1}&\ldots\\
-\frac{1}{4}&  \frac{2}{9} &    -\frac{3}{16} &     \ldots & (-1)^n\frac{n}{(n+1)^2}&\ldots\\
-\frac{1}{8}&   \frac{2}{27}&    -\frac{3}{64}&      \ldots & (-1)^n\frac{n}{(n+1)^3}&\ldots\\
\vdots& \vdots &\ddots & \vdots&\ldots &\ldots\\
-\frac{1}{2^i}  &    \frac{2}{3^i}&       -\frac{3}{4^i} & \dots &               (-1)^n\frac{n}{(n+1)^i}         &  \dots\\
  \vdots& \vdots &\ddots & \vdots&\ldots  & \ldots                     
\end{pmatrix}.
$$
\item Suppose the condition $\mathbb N_B=\varnothing$ holds and the following conditions are true
$$
q_{i,n}=\begin{cases}
\frac{1}{2}&\text{whenever $n=1$ and $m_1=1$}\\
\frac{1}{n}&\text{whenever $n$ is odd, $n>1$, and $m_n=n-1$}\\
\frac{2^{i-1} (n+1)}{(n+3)^i}&\text{whenever $n$ is even, $m_n=\infty$, and $i=1,2, \dots$}.
\end{cases}
$$
Then we obtain the following matrix
$$
\tilde Q^{'} _{\mathbb N_B}=\begin{pmatrix}
\frac{1}{2}& \frac{3}{5} & \frac{1}{3}& \frac{5}{7}&\ldots \\
\frac{1}{2}&  \frac{6}{25} & \frac{1}{3}& \frac{10}{49}&\ldots \\
& \frac{12}{125}& \frac{1}{3}& \frac{20}{7^3}& \ldots& \\
& \vdots & & \vdots & \vdots \\
& \frac{3\cdot 2^{i-1}}{5^i} & & \frac{5\cdot 2^{i-1}}{7^i}&\ldots  \\
 
                         &  \vdots       &              &  \vdots &          \vdots      
\end{pmatrix}.
$$
\end{itemize}

Let us consider the  $\tilde Q^{'} _{\mathbb N_B}$-expansion. 

\begin{definition}
\emph{A cylinder $\Delta^{\tilde Q^{'} _{\mathbb N_B}} _{c_1c_2...c_n}$} of rank $n$ with base $c_1\ldots c_n$ is a set $\{x: x=\Delta^{\tilde Q^{'} _{\mathbb N_B}} _{c_1...c_ni_{n+1}i_{n+2}...}\}$, where $c_1, c_2, \dots, c_n$ are fixed.
\end{definition}

Suppose that
$$
\tilde a_{m_n,n}=\begin{cases}
a_{m_n,n}&\text{if $n \in \mathbb N_B$}\\
a_{0,n}&\text{if $n \notin \mathbb N_B$,}
\end{cases}
~~~~~~~\tilde q_{m_n,n}=\begin{cases}
q_{m_n,n}&\text{if $n \in \mathbb N_B$}\\
q_{0,n}&\text{if $n \notin \mathbb N_B$,}
\end{cases}
$$
$$
\tilde a_{0,n}=\begin{cases}
a_{0,n}&\text{if $n \in \mathbb N_B$}\\
a_{m_n,n}&\text{if $n \notin \mathbb N_B$,}
\end{cases}\tilde  q_{0,n}=\begin{cases}
q_{0,n}&\text{if $n \in \mathbb N_B$}\\
q_{m_n,n}&\text{if $n \notin \mathbb N_B$.}
\end{cases}
$$
\begin{lemma} Cylinders  $\Delta^{\tilde Q^{'} _{\mathbb N_B}} _{c_1c_2...c_n}$ have the following properties:
\label{lm: 1}
\begin{enumerate}
\item a cylinder $\Delta^{\tilde Q^{'} _{\mathbb N_B}} _{c_1c_2...c_n}$  is a closed interval;
\item
$$
\inf\Delta^{\tilde Q^{'} _{\mathbb N_B}} _{c_1c_2...c_n}=(-1)^{\rho_1}a_{c_1,1}+\sum^{n} _{k=2}{\left((-1)^{\rho_k}a_{c_k,k}\prod^{k-1} _{j=1}{q_{c_j,j}}\right)}
$$
$$
-\left(\prod^{n} _{j=1}{q_{c_j,j}}\right)\left(\tilde a_{m_{n+1},n+1}+\sum^{\infty} _{t=n+2}{\left(\tilde a_{m_{t},t}\prod^{t-1} _{r=n+1}{\tilde q_{m_r,r}}\right)}\right),
 $$
$$
\sup\Delta^{\tilde Q^{'} _{\mathbb N_B}} _{c_1c_2...c_n}=(-1)^{\rho_1}a_{c_1,1}
 +\sum^{n} _{k=2}{\left((-1)^{\rho_k}a_{c_k,k}\prod^{k-1} _{j=1}{q_{c_j,j}}\right)}
$$
$$
+\left(\prod^{n} _{j=1}{q_{c_j,j}}\right)\left(\tilde a_{0,n+1}+\sum^{\infty} _{t=n+2}{\left(\tilde a_{0,t}\prod^{t-1} _{r=n+1}{\tilde q_{0,r}}\right)}\right);
$$
\item  the main metric relation is following
$$
\frac{\Delta^{\tilde Q^{'} _{\mathbb N_B}} _{c_1c_2...c_nc_{n+1}}}{\Delta^{\tilde Q^{'} _{\mathbb N_B}} _{c_1c_2...c_n}}
$$
$$
=q_{c_{n+1},n+1}\frac{\tilde a_{m_{n+2},n+2}+\sum^{\infty} _{t=n+3}{\left(\tilde a_{m_{t},t}\prod^{t-1} _{r=n+2}{\tilde q_{m_r,r}}\right)}+\tilde a_{0,n+2}+\sum^{\infty} _{t=n+3}{\left(\tilde a_{0,t}\prod^{t-1} _{r=n+2}{\tilde q_{0,r}}\right)}}{\tilde a_{m_{n+1},n+1}+\sum^{\infty} _{t=n+2}{\left(\tilde a_{m_{t},t}\prod^{t-1} _{r=n+1}{\tilde q_{m_r,r}}\right)}+\tilde a_{0,n+1}+\sum^{\infty} _{t=n+2}{\left(\tilde a_{0,t}\prod^{t-1} _{r=n+1}{\tilde q_{0,r}}\right)}}.
$$
\end{enumerate}
\end{lemma}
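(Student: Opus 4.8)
\emph{Plan of proof.} The idea is to strip off the first $n$ digits, turning the cylinder into an increasing affine copy of the range of a ``shifted'' expansion of the same type; then (2) is read off from the description of $[a',a'']$ recorded before the lemma, (1) follows by a nested-interval argument, and (3) by dividing two instances of (2). Concretely, fix $c_1,\dots ,c_n$ and apply the telescoping computation that produced \eqref{eq: the main series}, stopped after the $n$-th term, to rewrite every $x=\Delta^{\tilde Q^{'} _{\mathbb N_B}} _{c_1\dots c_ni_{n+1}i_{n+2}\dots}$ as
$$
x=A_n+W_ny,\qquad A_n=(-1)^{\rho_1}a_{c_1,1}+\sum^{n} _{k=2}(-1)^{\rho_k}a_{c_k,k}\prod^{k-1} _{j=1}q_{c_j,j},\quad W_n=\prod^{n} _{j=1}q_{c_j,j}>0 ,
$$
where $A_n$ is constant on the cylinder, $W_n>0$ by $1^{\circ}$, and $y$ is the sum of a series of the form \eqref{eq: the main series} built from the matrix whose columns are the $(n+1)$-st, $(n+2)$-nd, $\dots$ columns of $\tilde Q^{'} _{\mathbb N_B}$. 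Thus $\Delta^{\tilde Q^{'} _{\mathbb N_B}} _{c_1\dots c_n}=A_n+W_n Y_{n+1}$, where $Y_{n+1}$ is the set of all values of $y$.

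For (2) I would feed this shifted series into the formulas for $a'$ and $a''$ stated before the lemma, simplifying by means of $(-1)^{\rho_t}\tilde a_{m_t,t}=-\tilde a_{m_t,t}$ and $(-1)^{\rho_t}\tilde a_{0,t}=\tilde a_{0,t}$ (immediate, since $\tilde a_{m_t,t}$, resp.\ $\tilde a_{0,t}$, vanishes unless $t\in\mathbb N_B$, resp.\ $t\notin\mathbb N_B$). This gives
$$
\inf Y_{n+1}=-\Bigl(\tilde a_{m_{n+1},n+1}+\sum^{\infty} _{t=n+2}\tilde a_{m_t,t}\prod^{t-1} _{r=n+1}\tilde q_{m_r,r}\Bigr),\qquad \sup Y_{n+1}=\tilde a_{0,n+1}+\sum^{\infty} _{t=n+2}\tilde a_{0,t}\prod^{t-1} _{r=n+1}\tilde q_{0,r} ,
$$
and conjugating by the increasing map $t\mapsto A_n+W_nt$ turns these into precisely the two expressions in (2). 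Convergence of both series is the telescoping estimate coming from $\tilde a_{0,t}+\tilde q_{0,t}\le 1$ and $\tilde a_{m_t,t}+\tilde q_{m_t,t}\le 1$ (both forced by $2^{\circ}$ and the definition of $a_{i,n}$), under which the partial sums are increasing and bounded by $1$, together with $3^{\circ}$, which makes $\prod_r\tilde q_{0,r}\to 0$ and $\prod_r\tilde q_{m_r,r}\to 0$.

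For (1), boundedness is clear from (2); to see that the cylinder is the whole interval $[\inf,\sup]$ I would use $\Delta^{\tilde Q^{'} _{\mathbb N_B}} _{c_1\dots c_n}=\bigcup_{c_{n+1}=0}^{m_{n+1}}\Delta^{\tilde Q^{'} _{\mathbb N_B}} _{c_1\dots c_nc_{n+1}}$ and check, via (2) at rank $n+1$ and the identity $a_{c+1,\,n+1}-a_{c,\,n+1}=q_{c,\,n+1}$, that the closed intervals $[\inf,\sup]$ of the rank-$(n+1)$ subcylinders abut (in increasing order of $c_{n+1}$ when $n+1\notin\mathbb N_B$, in decreasing order when $n+1\in\mathbb N_B$) and together exhaust $[\inf,\sup]$ of the rank-$n$ cylinder. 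Then, for $y_0$ in that interval, one picks $c_{n+1}$ with $y_0$ in the $c_{n+1}$-th of these subintervals, then $c_{n+2}$, and so on; the nested closed intervals so obtained have lengths $W_m(\sup Y_{m+1}-\inf Y_{m+1})\le 2W_m$, which tend to $0$ by $3^{\circ}$, so their intersection is the single point $\Delta^{\tilde Q^{'} _{\mathbb N_B}} _{c_1\dots c_{n+1}c_{n+2}\dots}$; since $y_0$ lies in all of them, $y_0$ is that point, hence $y_0\in\Delta^{\tilde Q^{'} _{\mathbb N_B}} _{c_1\dots c_n}$, and the cylinder is a closed interval.

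Part (3) is then a division. By (2), $\bigl|\Delta^{\tilde Q^{'} _{\mathbb N_B}} _{c_1\dots c_n}\bigr|=W_n(\sup Y_{n+1}-\inf Y_{n+1})$, which is $W_n$ times the denominator of the asserted fraction, while $\bigl|\Delta^{\tilde Q^{'} _{\mathbb N_B}} _{c_1\dots c_{n+1}}\bigr|=W_{n+1}(\sup Y_{n+2}-\inf Y_{n+2})=W_nq_{c_{n+1},n+1}$ times the same expression with every index raised by one, i.e.\ $W_nq_{c_{n+1},n+1}$ times the numerator; dividing, $W_n$ cancels and the stated relation drops out. The step I expect to cost the most effort — and the one place where a sign slip would be fatal — is the ``abutting'' verification in (1), where the endpoint formulas of (2) must be combined with $a_{c+1,\,n+1}-a_{c,\,n+1}=q_{c,\,n+1}$ and the two possible orderings of the subcylinders (governed by $\rho_{n+1}$) must be tracked; everything else is routine once the computation of $[a',a'']$ before the lemma is taken as given.
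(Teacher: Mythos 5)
Your treatment of properties (2) and (3) is sound and is essentially the paper's route: the paper reads (2) off from the endpoint estimates in its proof of (1), and (3) is obtained by dividing the two lengths, exactly as you propose. The genuine problem is in your proof of property (1). The step where you claim that the rank-$(n+1)$ subcylinders $\Delta^{\tilde Q^{'}_{\mathbb N_B}}_{c_1\dots c_nc_{n+1}}$, $c_{n+1}=0,1,\dots ,m_{n+1}$, ``abut \dots and together exhaust'' the interval $[\inf,\sup]$ of the rank-$n$ cylinder is false in this generality. The identity $a_{c+1,n+1}-a_{c,n+1}=q_{c,n+1}$ yields abutment only when the tail sums satisfy $q_{c,n+1}(1-\omega_1)=q_{c+1,n+1}\omega_2$ (resp.\ with $\omega_1,\omega_2$ interchanged, according to the parity of $\rho_{n+1}$), where $\omega_1=\sum_{n+1<t\notin\mathbb N_B}a_{m_t,t}\prod^{t-1}_{r=n+2}\hat q_{i_r,r}$ and $\omega_2=\sum_{n+1<t\in\mathbb N_B}a_{m_t,t}\prod^{t-1}_{r=n+2}\check q_{i_r,r}$. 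In the purely positive case $\mathbb N_B=\varnothing$ these sums telescope to $1$ and $0$ and your claim holds, but for a general matrix $\tilde Q^{'}_{\mathbb N_B}$ they do not: immediately after the lemma the paper computes exactly these quantities ($\kappa_1$, $\kappa_2$) and concludes that the intersection of two consecutive cylinders can be an interval, a one-element set, or empty, depending on the matrix. When it is empty there is a gap, a point $y_0$ of the parent interval lying in that gap belongs to no subcylinder, and your nested-interval selection of $c_{n+1}$ cannot even begin. This is precisely why the concluding theorem of the paper must impose its system of inequalities (the no-gap conditions) in order to represent every point of $[a^{'}_0,a^{''}_0]$.

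For comparison, the paper's own proof of (1) is much weaker than what you attempt: it only shows that every point of the cylinder lies in $[x^{'},x^{''}]$ and that the endpoints $x^{'},x^{''}$ are attained by admissible digit sequences, i.e.\ $\Delta^{\tilde Q^{'}_{\mathbb N_B}}_{c_1\dots c_n}\subseteq[x^{'},x^{''}]$ with $x^{'},x^{''}\in\Delta^{\tilde Q^{'}_{\mathbb N_B}}_{c_1\dots c_n}$; the reverse inclusion is never proved there. So the full ``closed interval'' statement is not actually established by the paper either, and it cannot be obtained by your route without adding hypotheses of the type appearing in the final theorem (which guarantee overlap or abutment at every level). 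As written, your argument for (1) has a concrete false step; either restrict to matrices satisfying those inequalities, or weaken (1) to what the containment-plus-attained-endpoints argument actually gives.
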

 \begin{proof}
 Let us prove that \emph{the first property is true}. Let $n\notin \mathbb N_B$ and $x\in \Delta^{\tilde Q^{'} _{\mathbb N_B}} _{c_1c_2...c_n}$, i.e.,  
 $$
 x=(-1)^{\rho_1}a_{c_1,1}+\sum^{n} _{k=2}{\left((-1)^{\rho_k}a_{c_k,k}\prod^{k-1} _{j=1}{q_{c_j,j}}\right)}
$$
$$
+\left(\prod^{n} _{j=1}{q_{c_j,j}}\right)\left((-1)^{\rho_{n+1}}a_{i_{n+1},n+1}+\sum^{\infty} _{t=n+2}{\left((-1)^{\rho_t}a_{i_t,t}\prod^{t-1} _{r=n+1}{q_{i_r,r}}\right)}\right).
 $$
 Then
 $$
 x^{'}=(-1)^{\rho_1}a_{c_1,1}+\sum^{n} _{k=2}{\left((-1)^{\rho_k}a_{c_k,k}\prod^{k-1} _{j=1}{q_{c_j,j}}\right)}
$$
$$
-\left(\prod^{n} _{j=1}{q_{c_j,j}}\right)\left(\tilde a_{m_{n+1},n+1}+\sum^{\infty} _{t=n+2}{\left(\tilde a_{m_{t},t}\prod^{t-1} _{r=n+1}{\tilde q_{m_r,r}}\right)}\right)\le x\le (-1)^{\rho_1}a_{c_1,1}
 $$
 $$
 +\sum^{n} _{k=2}{\left((-1)^{\rho_k}a_{c_k,k}\prod^{k-1} _{j=1}{q_{c_j,j}}\right)}+\left(\prod^{n} _{j=1}{q_{c_j,j}}\right)\left(\tilde a_{0,n+1}+\sum^{\infty} _{t=n+2}{\left(\tilde a_{0,t}\prod^{t-1} _{r=n+1}{\tilde q_{0,r}}\right)}\right)=x^{''}. 
 $$
 
 Hence $x\in [x^{'}, x^{''}]$ and $[x^{'}, x^{''}] \supseteq \Delta^{\tilde Q^{'} _{\mathbb N_B}} _{c_1c_2...c_n}$.
 
Since the equalities
 $$
 x^{'}=(-1)^{\rho_1}a_{c_1,1}+\sum^{n} _{k=2}{\left((-1)^{\rho_k}a_{c_k,k}\prod^{k-1} _{j=1}{q_{c_j,j}}\right)}
$$
$$
+\left(\prod^{n} _{j=1}{q_{c_j,j}}\right)\inf{\left\{(-1)^{\rho_{n+1}}a_{i_{n+1},n+1}+\sum^{\infty} _{t=n+2}{\left((-1)^{\rho_t}a_{i_t,t}\prod^{t-1} _{r=n+1}{q_{i_r,r}}\right)}\right\}},
 $$
 $$
 x^{''}=(-1)^{\rho_1}a_{c_1,1}+\sum^{n} _{k=2}{\left((-1)^{\rho_k}a_{c_k,k}\prod^{k-1} _{j=1}{q_{c_j,j}}\right)}
$$
$$
+\left(\prod^{n} _{j=1}{q_{c_j,j}}\right)\sup{\left\{(-1)^{\rho_{n+1}}a_{i_{n+1},n+1}+\sum^{\infty} _{t=n+2}{\left((-1)^{\rho_t}a_{i_t,t}\prod^{t-1} _{r=n+1}{q_{i_r,r}}\right)}\right\}},
 $$
hold, we have
 $ x \in\Delta^{\tilde Q^{'} _{\mathbb N_B}} _{c_1c_2...c_n}$ and $x^{'},x^{''}\in\Delta^{\tilde Q^{'} _{\mathbb N_B}} _{c_1c_2...c_n}$.

It is obvious that the first property is true when $n\in \mathbb N_B$.

\emph{The second property } follows from the proof of the first property.

\emph{The third property } follows from the second property.
 \end{proof}

Let us consider the mutual placement of $\Delta^{\tilde Q^{'} _{\mathbb N_B}} _{c_1c_2...c_{n-1}c}$ and $\Delta^{\tilde Q^{'} _{\mathbb N_B}} _{c_1c_2...c_{n-1}[c+1]}$.

Suppose that  $\Delta^{\tilde Q^{'} _{\mathbb N_B}} _{c_1c_2...c_{n-1}c}\bigcap\Delta^{\tilde Q^{'} _{\mathbb N_B}} _{c_1c_2...c_{n-1}[c+1]}$ is not the empty or one-element set; then we have the following:
 \begin{itemize}
 \item if cylinders $\Delta^{\tilde Q^{'} _{\mathbb N_B}} _{c_1c_2...c_{n-1}c}$ and $\Delta^{\tilde Q^{'} _{\mathbb N_B}} _{c_1c_2...c_{n-1}[c+1]}$ are  \emph{ ``left-to-right''} situated, then
 $$
\kappa_1=\sup {\Delta^{\tilde Q^{'} _{\mathbb N_B}} _{c_1c_2...c_{n-1}c}}-\inf {\Delta^{\tilde Q^{'} _{\mathbb N_B}} _{c_1c_2...c_{n-1}[c+1]}}> 0;
 $$
 \item if cylinders $\Delta^{\tilde Q^{'} _{\mathbb N_B}} _{c_1c_2...c_{n-1}c}$ and $\Delta^{\tilde Q^{'} _{\mathbb N_B}} _{c_1c_2...c_{n-1}[c+1]}$ are  \emph{ ``right-to-left''} situated, then
 $$
\kappa_2=\sup{\Delta^{\tilde Q^{'} _{\mathbb N_B}} _{c_1c_2...c_{n-1}[c+1]}} -\inf {\Delta^{\tilde Q^{'} _{\mathbb N_B}} _{c_1c_2...c_{n-1}c}}> 0.
 $$
 \end{itemize}

In addition, in the first case, we get
 $$
 \kappa_1<\kappa_2=|\Delta^{\tilde Q^{'} _{\mathbb N_B}} _{c_1c_2...c_{n-1}c}|+|\Delta^{\tilde Q^{'} _{\mathbb N_B}} _{c_1c_2...c_{n-1}[c+1]}|-|\Delta^{\tilde Q^{'} _{\mathbb N_B}} _{c_1c_2...c_{n-1}c}\cap\Delta^{\tilde Q^{'} _{\mathbb N_B}} _{c_1c_2...c_{n-1}[c+1]}|=W.
 $$
 In the second case,  $\kappa_2<\kappa_1=W$.

Suppose that  $\Delta^{\tilde Q^{'} _{\mathbb N_B}} _{c_1c_2...c_{n-1}c}\bigcap\Delta^{\tilde Q^{'} _{\mathbb N_B}} _{c_1c_2...c_{n-1}[c+1]}$ is the empty or one-element set; then we have the following:
 \begin{itemize}
 \item if cylinders $\Delta^{\tilde Q^{'} _{\mathbb N_B}} _{c_1c_2...c_{n-1}c}$ and $\Delta^{\tilde Q^{'} _{\mathbb N_B}} _{c_1c_2...c_{n-1}[c+1]}$ are  \emph{ ``left-to-right''} situated, then
 $$
\nu_1=\inf {\Delta^{\tilde Q^{'} _{\mathbb N_B}} _{c_1c_2...c_{n-1}[c+1]}}-\sup {\Delta^{\tilde Q^{'} _{\mathbb N_B}} _{c_1c_2...c_{n-1}c}}=-\kappa_1\ge 0;
 $$
 \item if cylinders $\Delta^{\tilde Q^{'} _{\mathbb N_B}} _{c_1c_2...c_{n-1}c}$ and $\Delta^{\tilde Q^{'} _{\mathbb N_B}} _{c_1c_2...c_{n-1}[c+1]}$ are  \emph{ ``right-to-left''} situated, then
 $$
\nu_2=\inf{\Delta^{\tilde Q^{'} _{\mathbb N_B}} _{c_1c_2...c_{n-1}c}} -\sup {\Delta^{\tilde Q^{'} _{\mathbb N_B}} _{c_1c_2...c_{n-1}[c+1]}}=-\kappa_2\ge 0.
 $$
 \end{itemize}
 
Also, in the first case, we have
 $$
\nu_1>\nu_2= V=-|\Delta^{\tilde Q^{'} _{\mathbb N_B}} _{c_1c_2...c_{n-1}c}|-|\Delta^{\tilde Q^{'} _{\mathbb N_B}} _{c_1c_2...c_{n-1}[c+1]}|-\varpi,
 $$
where $\varpi$ is the Lebesgue measure of the interval situated between $\Delta^{\tilde Q^{'} _{\mathbb N_B}} _{c_1c_2...c_{n-1}c}$ and  $\Delta^{\tilde Q^{'} _{\mathbb N_B}} _{c_1c_2...c_{n-1}[c+1]}$. In the second case, $V=\nu_1<\nu_2$.

Let us consider the difference 
 $$
\kappa_1\equiv\sup{\Delta^{\tilde Q^{'} _{\mathbb N_B}} _{c_1c_2...c_{n-1}c}}-\inf{\Delta^{\tilde Q^{'} _{\mathbb N_B}} _{c_1c_2...c_{n-1}[c+1]}}=a_{c,n}(-1)^{\rho_n}\prod^{n-1} _{j=1}{q_{c_j,j}}
 $$
 $$
+q_{c,n}\left(\prod^{n-1} _{j=1}{q_{c_j,j}}\right)\left(\sum^{\infty} _{n<t\notin\mathbb N_B}{\left(\hat a_{i_t,t}\prod^{t-1} _{r=n+1}{\hat q_{i_r,r}}\right)}\right)-a_{c+1,n}(-1)^{\rho_n}\prod^{n-1} _{j=1}{q_{c_j,j}}
 $$
 $$
+q_{c+1,n}\left(\prod^{n-1} _{j=1}{q_{c_j,j}}\right)\left(\sum_{n<t\in\mathbb N_B}{\left(\check a_{i_{t},t}\prod^{t-1} _{r=n+1}{\check q_{i_r,r}}\right)}\right)=\left(\prod^{n-1} _{j=1}{q_{c_j,j}}\right)
$$
 $$
\times\left(-q_{c,n}(-1)^{\rho_n}+q_{c,n}\sum_{n<t\notin\mathbb N_B}{\left( a_{m_t,t}\prod^{t-1} _{r=n+1}{\hat q_{i_r,r}}\right)}+q_{c+1,n}\sum_{n<t\in\mathbb N_B}{\left( a_{m_{t},t}\prod^{t-1} _{r=n+1}{\check q_{i_r,r}}\right)}
\right).
$$

Using 
 $$
 \omega_1=\sum^{\infty} _{n<t\notin\mathbb N_B}{\left( a_{m_t,t}\prod^{t-1} _{r=n+1}{\hat q_{i_r,r}}\right)}~~~\text{and}~~~
 \omega_2= \sum_{n<t\in\mathbb N_B}{\left(a_{m_{t},t}\prod^{t-1} _{r=n+1}{\check q_{i_r,r}}\right)},
 $$
 we get
 $$
 \kappa_1=(q_{c,n}(-1)^{1+\rho_n}+q_{c,n}\omega_1+q_{c+1,n}\omega_2)q_{c_1,1}q_{c_2,2}...q_{c_{n-1},n-1}.
 $$
 
From the last expression, we have the following:
\begin{itemize}
\item $\kappa_1>0$ whenever $\rho_n=1$, i.e., $n\in\mathbb N_B$;
\item if $\rho_n=2$, i.e., $n\notin\mathbb N_B$, then:
\end{itemize}
$$
\text{if}~ (1-\omega_1)q_{c,n}=q_{c+1,n}\omega_2, ~\text{then}~ \kappa_1=0;
$$
$$
\text{if}~ (1-\omega_1)q_{c,n}>q_{c+1,n}\omega_2, ~\text{then}~ \kappa_1<0;
$$
$$
\text{if}~ (1-\omega_1)q_{c,n}<q_{c+1,n}\omega_2, ~\text{then}~ \kappa_1>0.
$$
Really, since $0\le\omega_1\le 1$ and $0\le\omega_2\le 1$, for $n\notin\mathbb N_B$ we have 
$$
 -q_{c,n}\le\frac{\kappa_1}{q_{c_1,1}q_{c_2,2}\dots q_{c_{n-1},n-1}}\le q_{c+1,n}.
 $$
 
Let us consider the difference
$$
\kappa_2\equiv\sup{\Delta^{\tilde Q^{'} _{\mathbb N_B}} _{c_1c_2...c_{n-1}[c+1]}}-\inf{\Delta^{\tilde Q^{'} _{\mathbb N_B}} _{c_1c_2...c_{n-1}c}}=(-1)^{\rho_n}a_{c+1,n}\prod^{n-1} _{j=1}{q_{c_j,j}}
$$
$$
+q_{c+1,n}\left(\prod^{n-1} _{j=1}{q_{c_j,j}}\right)\left(\sum_{n<t\notin \mathbb N_B}{\left( a_{m_t,t}\prod^{t-1} _{r=n+1}{\hat q_{i_r,r}}\right)}\right)-a_{c,n}(-1)^{\rho_n}\prod^{n-1} _{j=1}{q_{c_j,j}}
$$
$$
+q_{c,n}\left(\prod^{n-1} _{j=1}{q_{c_j,j}}\right)\left(\sum_{n<t\in\mathbb N_B}{\left(a_{m_{t},t}\prod^{t-1} _{r=n+1}{\check q_{i_r,r}}\right)}\right)
$$
$$
=(q_{c,n}(-1)^{\rho_n}+q_{c+1,n}\omega_1+q_{c,n}\omega_2)\left(\prod^{n-1} _{j=1}{q_{c_j,j}}\right).
$$

Hence, we obtain the following:
\begin{itemize}
\item $\kappa_2>0$ whenever $\rho_n=2$, i.e., $n\notin\mathbb N_B$;
\item if $\rho_n=1$, i.e., $n\in\mathbb N_B$, then:
\end{itemize}
$$
\text{if}~ (1-\omega_2)q_{c,n}=q_{c+1,n}\omega_1, ~\text{then}~ \kappa_2=0;
$$
$$
\text{if}~ (1-\omega_2)q_{c,n}>q_{c+1,n}\omega_1, ~\text{then}~ \kappa_2<0;
$$
$$
\text{if}~ (1-\omega_2)q_{c,n}<q_{c+1,n}\omega_1, ~\text{then}~ \kappa_2>0.
$$
So,
$$
-q_{c,n}\le\frac{\kappa_2}{q_{c_1,1}q_{c_2,2}...q_{c_{n-1},n-1}}\le q_{c+1,n}.
$$

Finally, we obtain the following results:
\begin{itemize}
\item If $\rho_n=1$, then cylinders $\Delta^{\tilde Q^{'} _{\mathbb N_B}} _{c_1c_2...c_{n-1}c}$ and $\Delta^{\tilde Q^{'} _{\mathbb N_B}} _{c_1c_2...c_{n-1}[c+1]}$ are   right-to-left situated. However the set $\Delta^{\tilde Q^{'} _{\mathbb N_B}} _{c_1c_2...c_{n-1}c}\bigcap\Delta^{\tilde Q^{'} _{\mathbb N_B}} _{c_1c_2...c_{n-1}[c+1]}$ is  the empty set  or the one-element set or an interval. It depends on the matrix $\tilde Q^{'} _{\mathbb N_B}$.

\item If $\rho_n=2$, then cylinders $\Delta^{\tilde Q^{'} _{\mathbb N_B}} _{c_1c_2...c_{n-1}c}$ and $\Delta^{\tilde Q^{'} _{\mathbb N_B}} _{c_1c_2...c_{n-1}[c+1]}$ are left-to-right situated. However the set $\Delta^{\tilde Q^{'} _{\mathbb N_B}} _{c_1c_2...c_{n-1}c}\bigcap\Delta^{\tilde Q^{'} _{\mathbb N_B}} _{c_1c_2...c_{n-1}[c+1]}$ is one of the following sets (that depends on  $\tilde Q^{'} _{\mathbb N_B}$):  the empty set,  the one-element set,  an interval.
\end{itemize}

 Since Lemma~\ref{lm: 1} is true, the following statement holds. 
\begin{theorem*}
For an arbitrary number $x\in[a^{'} _0, a^{''} _0]$ there exists a sequence $(i_n)$, $i_n\in N^0 _{m_n}\equiv~\{0,1,\dots ,m_n\}$, such that
$$
x=(-1)^{\rho_1}a_{i_1,1}+\sum^{\infty} _{n=2}{\left((-1)^{\rho_n}a_{i_n,n}\prod^{n-1} _{j=1} {q_{i_j,j}}\right)}
$$
whenever for all   $n \in \mathbb N$ the following system of conditions holds: 
$$
\left\{
\begin{aligned}
q_{i_n,n}\left(1-\sum_{n<t\in\mathbb N_B}{\left(a_{m_{t},t}\prod^{t-1} _{r=n+1}{\check q_{i_r,r}}\right)}\right)\le q_{i_n+1,n}\left(\sum_{n<t\notin\mathbb N_B}{\left( a_{m_t,t}\prod^{t-1} _{r=n+1}{\hat q_{i_r,r}}\right)}\right)~\text{for}~n\in\mathbb N_B\\
q_{i_n,n}\left(1-\sum_{n<t\notin\mathbb N_B}{\left( a_{m_t,t}\prod^{t-1} _{r=n+1}{\hat q_{i_r,r}}\right)}\right)\le q_{i_n+1,n}\left( \sum_{n<t\in\mathbb N_B}{\left(a_{m_{t},t}\prod^{t-1} _{r=n+1}{\check q_{i_r,r}}\right)}\right)~\text{for}~n\notin\mathbb N_B.
\end{aligned}
\right.
$$
\end{theorem*}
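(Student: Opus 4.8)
The plan is to combine Lemma~\ref{lm: 1} with the computations of $\kappa_1$ and $\kappa_2$ made just above the statement, and then to run a nested–intervals argument; the whole point is that the displayed system of inequalities is exactly the requirement that adjacent cylinders of the same rank leave no gap, after which the base interval is covered digit by digit.

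\emph{Step 1: reading the hypothesis geometrically.} From $\kappa_1=(q_{c,n}(-1)^{1+\rho_n}+q_{c,n}\omega_1+q_{c+1,n}\omega_2)\,q_{c_1,1}\cdots q_{c_{n-1},n-1}$ and the analogous expression for $\kappa_2$, and using $q_{c_1,1}\cdots q_{c_{n-1},n-1}>0$, the first inequality of the system (for $n\in\mathbb N_B$, where $\rho_n=1$) is equivalent to $\kappa_2\ge 0$, and the second (for $n\notin\mathbb N_B$, where $\rho_n=2$) is equivalent to $\kappa_1\ge 0$. By the case analysis already recorded, in either case this says precisely that the two neighbouring rank-$n$ cylinders $\Delta^{\tilde Q^{'} _{\mathbb N_B}} _{c_1\dots c_{n-1}c}$ and $\Delta^{\tilde Q^{'} _{\mathbb N_B}} _{c_1\dots c_{n-1}[c+1]}$ have empty gap between them (they overlap, touch, or are adjacent). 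Moreover $\omega_1,\omega_2$ depend only on the rank $n$, since the symbols $\check q_{i_r,r},\hat q_{i_r,r}$ do not actually depend on $i_r$; hence the hypothesis is a condition on the matrix alone.

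\emph{Step 2: covering.} I would prove that for every $n$ and every admissible base $c_1\dots c_{n-1}$,
$$
\Delta^{\tilde Q^{'} _{\mathbb N_B}} _{c_1\dots c_{n-1}}=\bigcup_{c\in N^0 _{m_n}}\Delta^{\tilde Q^{'} _{\mathbb N_B}} _{c_1\dots c_{n-1}c},
$$
where for $n=1$ the left-hand side is the base interval $[a^{'} _0,a^{''} _0]$, i.e.\ the rank-$0$ cylinder $[a^{'},a^{''}]$. The inclusion $\supseteq$ is immediate from the definition of a cylinder (and it shows each sub-cylinder lies inside the parent). For $\subseteq$: by Lemma~\ref{lm: 1}(1),(2) every set in sight is a closed interval with the endpoints written there; ordered by $c$ the sub-cylinders run left to right when $n\notin\mathbb N_B$ and right to left when $n\in\mathbb N_B$; by Step~1 consecutive ones leave no gap (possible overlaps do no harm); and comparing the endpoint formulas of Lemma~\ref{lm: 1}(2) for the parent with those for $c=0$ and $c=m_n$ shows the two extreme sub-cylinders reach $\inf$ and $\sup$ of the parent — here the factor $(-1)^{\rho_n}$ is exactly what makes the endpoints line up, and when $m_n=+\infty$ the relevant endpoint is only approached in the limit, since $a_{c,n}\to 1$ and $q_{c,n}\to 0$ as $c\to\infty$, which is precisely the value read off from Lemma~\ref{lm: 1}(2). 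Hence the union fills the whole parent interval.

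\emph{Step 3: conclusion.} Given $x\in[a^{'} _0,a^{''} _0]$, apply Step~2 repeatedly to choose $i_1,i_2,\dots$ with $x\in\Delta^{\tilde Q^{'} _{\mathbb N_B}} _{i_1\dots i_n}$ for all $n$; since $\Delta^{\tilde Q^{'} _{\mathbb N_B}} _{i_1\dots i_n i_{n+1}}\subseteq\Delta^{\tilde Q^{'} _{\mathbb N_B}} _{i_1\dots i_n}$, these form a decreasing chain of nonempty closed intervals. From $a_{i,t}\le 1-q_{i,t}$ (whence $\tilde a_{m_t,t}\le 1-\tilde q_{m_t,t}$ and $\tilde a_{0,t}\le 1-\tilde q_{0,t}$) a telescoping of the two bracketed sums in Lemma~\ref{lm: 1}(2) bounds each of them by $1$, so $|\Delta^{\tilde Q^{'} _{\mathbb N_B}} _{i_1\dots i_n}|\le 2\prod_{j=1}^{n}q_{i_j,j}\to 0$ by condition $3^{\circ}$. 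Thus $\bigcap_n\Delta^{\tilde Q^{'} _{\mathbb N_B}} _{i_1\dots i_n}$ is a single point; it contains $x$, and it contains the sum of series~\eqref{eq: the main series} with digits $(i_n)$ (that sum lies in every cylinder of the chain by definition). Hence $x=\Delta^{\tilde Q^{'} _{\mathbb N_B}} _{i_1 i_2\dots i_n\dots}$. The main obstacle is Step~2 — the covering — and within it the boundary bookkeeping: tracking that the orientation of the chain of sub-cylinders flips with the parity of $\rho_n$, and handling $m_n=+\infty$ (no last sub-cylinder) by the limiting argument above; Step~1 is the algebra already done in the excerpt, and Step~3 is a standard nested-intervals argument.
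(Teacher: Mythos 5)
Your proposal is correct and takes essentially the same route the paper intends: the paper's entire ``proof'' is the one sentence that the theorem follows from Lemma~\ref{lm: 1} together with the preceding $\kappa_1,\kappa_2$ placement analysis, and your Steps 1--3 merely supply the missing details (the inequalities read as ``no gap'' between adjacent rank-$n$ cylinders, covering of each parent interval by its sub-cylinders, and nested closed intervals whose lengths tend to $0$ by condition $3^{\circ}$). If anything you are more careful than the paper, e.g.\ in flagging the $m_n=+\infty$ endpoint behaviour that the paper's unconditional Lemma~\ref{lm: 1}(1) glosses over.
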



\begin{thebibliography}{9}

\bibitem{Bush1952}  K.~A.~Bush, Continuous functions without derivatives,  \emph{Amer. Math. Monthly} \textbf{59}   (1952),  222--225. 

\bibitem{Cantor1} G.~Cantor, Ueber die einfachen Zahlensysteme, 
\emph{Z. Math. Phys. }   {\bf 14} (1869), 121--128.


\bibitem{IS2009} S.~Ito and T.~Sadahiro, Beta-expansions with negative bases, \emph{Integers} \textbf{9} (2009), 239--259.




\bibitem{Renyi1957} A.~R\'enyi, Representations for real numbers and their ergodic properties, 
 \emph{Acta. Math. Acad. Sci. Hungar.} \textbf{8} (1957), 477--493.



\bibitem{Serbenyuk2016}
S.~ Serbenyuk. Nega-$\tilde Q$-representation as a generalization of certain alternating representations
of real numbers, \emph{Bull. Taras Shevchenko Natl. Univ. Kyiv Math. Mech.} 1 (35) (2016), 32-39,
available at https://www.researchgate.net/publication/308273000 (in Ukrainian)

\bibitem{Serbenyuk2017}
Symon Serbenyuk. Representation of real numbers by the alternating Cantor series, \emph{ Integers} {\bf  17}(2017), Paper No. A15, 27 pp.


\bibitem{Symon2017} S. O. Serbenyuk, Continuous Functions with Complicated Local Structure Defined in Terms of Alternating Cantor Series Representation of Numbers, {\itshape Zh. Mat. Fiz. Anal. Geom.} 
 {\bf 13} (2017), no. 1,  57--81.


\bibitem{S. Serbenyuk preprint2} \emph{Serbenyuk S.} Non-differentiable functions defined in terms of classical representations of real numbers,  available at  https://arxiv.org/pdf/1705.05575v1.pdf  

\bibitem{S.Serbenyuk} \emph{Serbenyuk S.} On some generalizations of real numbers representations, arXiv:1602.07929v1 (in Ukrainian)

\bibitem{S.Serbenyuk 2017} S. Serbenyuk, On one fractal property of the Minkowski function, \emph{Revista de la Real Academia de Ciencias Exactas, F\' isicas y Naturales. Serie A. Matem\' aticas} \textbf{112} (2018), no.~2, 555--559, doi:10.1007/s13398-017-0396-5
  
\end{thebibliography}
\end{document}